\theoremstyle{definition}
\newtheorem{ex}{Experiment}
\author{\SW}
\address{\SWaddr}
\email{\SWmail}
\title{Adaptive integration of $5$-convex and $5$-concave functions}
\keywords{%
Approximate integration,
quadratures,
Gauss quadrature,
Lobatto quadrature,
adaptive methods,
stopping criterion,
higher-order convexity.}
\subjclass[2010]{Primary: 65D30. Secondary: 26A51, 26D15, 41A55, 41A80, 65D32.}
\date{\today}
\begin{document}

\begin{abstract}
This paper introduces an adaptive numerical integration method specifically designed for $5$-convex and $5$-concave functions of class $C^6[a,b]$. The approach is based on a refined inequality involving the three-point Gauss quadrature ($\mathcal{G}$) and the four-point Lobatto quadrature ($\mathcal{L}$). We utilize a specific linear combination, $Q = \frac{3}{4}\mathcal{G} + \frac{1}{4}\mathcal{L}$, and demonstrate that for functions with a sixth derivative of constant sign, the approximation error is effectively controlled by the difference between these two quadratures. We provide a rigorous justification for the stopping criterion of the adaptive algorithm. Numerical experiments, including the approximation of the integrals of a reciprocal function as well as of the exponential function, show that the proposed method significantly outperforms existing adaptive techniques designed for lower-order convexity, requiring substantially fewer subintervals to achieve high precision (up to $10^{-16}$).
\end{abstract}

\maketitle

\section{Remarks on adaptive methods of approximate integration}
Numerical integration of a function $f\colon [a,b]\to\R$ is typically performed using a \emph{quadrature rule}:
\[
\int_a^b f(x) dx \approx Q[f;a,b] := \sum_{k=1}^m w_k f\bigl(\lambda_k a + (1-\lambda_k)b\bigr),
\]
where the coefficients $\lambda_1, \dots, \lambda_m \in [0,1]$ and the weights $w_1, \dots, w_m \in \R$ (usually positive) are fixed parameters. A well-known example is the \emph{simple Simpson's rule}:
\[
\Smp[f;a,b] = \frac{b-a}{6} \biggl[f(a) + 4f\biggl(\frac{a+b}{2}\biggr) + f(b)\biggr].
\]
Every quadrature rule induces a \emph{composite rule}, created by subdividing the interval $[a,b]$ into $n$ subintervals with equally spaced end-points $a=x_0 < x_1 < \dots < x_n=b$:
\[
\int_a^b f(x) dx \approx \sum_{k=1}^n Q[f;x_{k-1}, x_k].
\]
Let $\varepsilon > 0$ be a fixed tolerance. In most cases, a simple quadrature rule is insufficient, as $|Q[f;a,b] - \I[f;a,b]| > \varepsilon$. To achieve the desired precision, we apply the composite rule $Q_n$ with $n$ large enough so that:
\[
|Q_n[f;a,b] - \I[f;a,b]| < \varepsilon.
\]
Such a method is called \emph{adaptive}. To determine $n$ for functions that are sufficiently regular, error bounds can be utilized. For instance, for the composite Simpson's rule over $f \in C^4[a,b]$, the following estimation is widely used:
\[
|\Smp_n[f;a,b] - \I[f;a,b]| \xle \frac{(b-a)^5}{2880n^4} \sup \bigl\{ |f^{(4)}(x)| : x \in [a,b] \bigr\}.
\]
If the right-hand side is less than $\varepsilon$, a suitable $n$ can be easily determined. However, this approach can be complicated due to difficulties in estimating the maximum of $|f^{(4)}|$, or even impossible if $f$ lacks sufficient regularity.

Another approach to adaptive integration is based on the successive incrementation of $n$. Various \emph{stopping criteria} exist in the literature. Lyness \cite{Lyn69} proposed using local errors:
\[
\varepsilon_k = \frac{1}{15} \biggl| \Smp\Bigl[f;x_k, \frac{x_k+x_{k+1}}{2}\Bigr] + \Smp\Bigl[f; \frac{x_k+x_{k+1}}{2}, x_{k+1}\Bigr] - \Smp[f;x_k, x_{k+1}] \biggr|.
\]
Another criterion, proposed by Clenshaw and Curtis \cite{CleCur60} and investigated by Rowland and Varol \cite{RowVar72}, uses the stopping inequality:
\[
|\Smp_{2n}[f;a,b] - \I[f;a,b]| \xle |\Smp_{2n}[f;a,b] - \Smp_n[f;a,b]|,
\]
valid for all $n \in \mathbb{N}$ and functions $f \in C^4[a,b]$ with $f^{(4)}$ of a constant sign. An interesting survey of such criteria was provided by Gonnet \cite{Gon12} (also available on the arXiv repository, 	\href{https://arxiv.org/abs/1003.4629}{\texttt{arXiv:1003.4629}}).

Following these ideas, an adaptive method based on Simpson and Chebyshev quadratures was introduced in \cite{Was20}:
\[
C[f;a,b] = \frac{b-a}{3} \biggl[ f\biggl(\frac{2+\sqrt{2}}{4}a + \frac{2-\sqrt{2}}{4}b\biggr) + f\biggl(\frac{a+b}{2}\biggr) + f\biggl(\frac{2-\sqrt{2}}{4}a + \frac{2+\sqrt{2}}{4}b\biggr) \biggr]
\]
within the class of 3-convex functions. In this paper, we present a similar method for 5-convex functions. Numerical experiments show that it requires significantly fewer computations than the method described in \cite{Was20}.

\section{Higher-order convexity}

Given that our study focuses on $5$-convex functions belonging to the $C^6$ class, it is appropriate to provide a concise overview of the general theory of higher-order convexity. The idea of examining functions characterized by non-negative divided differences was originally introduced by Hopf~\cite{Hop26}. This line of research was further expanded by Popoviciu~\cite{Pop34, Pop44}, who explored the characteristics of this family of functions. According to his definition, a function $f$ is classified as $n$-\emph{convex} provided that
\[
 [x_0, x_1, \dots, x_{n+1}; f] \xge 0,
\]
where the divided differences are determined through the recursive formula $[x_0; f] = f(x_0)$ and
\[
 [x_0, x_1, \dots, x_{n+1}; f] = \frac{[x_1, \dots, x_{n+1}; f] - [x_0, \dots, x_n; f]}{x_{n+1} - x_0}.
\]
Popoviciu established several fundamental properties of these mappings. Notably, he demonstrated that $n$-convexity on an interval $[a, b]$ implies the existence of the $(n-1)$-th derivative on $(a, b)$. It should be noted, however, that while $x \mapsto x^n_+$ (where $x_+ = \max\{x, 0\}$) is $n$-convex on $\mathbb{R}$, it lacks an $n$-th derivative at the origin. Furthermore, he proved (\cite[p.~18]{Pop34}) that for odd $n$, the $n$-convexity is preserved under composition with affine transformations. 

Another significant finding by Popoviciu is that the non-negativity of the $(n+1)$-th derivative on $[a, b]$ is a sufficient condition for a function to be $n$-convex. Within this framework, $1$-convexity coincides with the standard definition of a convex function. Analogously, $f$ is termed $n$-\emph{concave} if the function $-f$ satisfies the condition for $n$-convexity. Consequently, functions whose sixth derivative maintains a constant sign (as discussed earlier) are either $5$-convex or $5$-concave.

\section{Introduction}

In this paper, we focus on the adaptive approximation of integrals of $5$-convex or $5$-concave functions of class $C^6[a,b]$, i.e., functions whose sixth derivative is continuous and has a constant sign on the interval of integration $[a,b]$. We will employ the \emph{three-point Gauss quadrature}
\begin{align*}
 \G[f;a,b]&=\frac{b-a}{18}\Biggl[
    5 f\biggl(\frac{5+\sqrt{15}}{10}a+\frac{5-\sqrt{15}}{10}b\biggr)+
    8 f\biggl(\frac{a+b}{2}\biggr)+
    5 f\biggl(\frac{5-\sqrt{15}}{10}a+\frac{5+\sqrt{15}}{10}b\biggr)
   \Biggr]
\intertext{and the \emph{four-point Lobatto quadrature}}
 \Lob[f;a,b]&=\frac{b-a}{12}\Biggl[
    f(a)+
    5 f\biggl(\frac{5+\sqrt{5}}{10}a+\frac{5-\sqrt{5}}{10}b\biggr)+
    5f\biggl(\frac{5-\sqrt{5}}{10}a+\frac{5+\sqrt{5}}{10}b\biggr)+
    f(b)
   \Biggr].
\end{align*}

Bessenyei and P\'ales \cite{BesPal02} proved that if a function $f\colon [a,b]\to\R$ is $5$-convex, then
\begin{equation}\label{ineq:BP}
 \G[f;a,b] \xle \int_a^b f(x) dx \xle \Lob[f;a,b].
\end{equation}
Together with Komisarski \cite[Theorem~5]{KomWas17}, we refined this inequality for $5$-convex functions of class $C^6[-1,1]$, proving that
\begin{equation}\label{ineq:KomWas17}
 0 \xle \int_{-1}^1 f(x) dx - \G[f;-1,1] \xle \Lob[f;-1,1] - \int_{-1}^1 f(x) dx.
\end{equation}
Note that this inequality can be immediately extended to any interval $[a,b]$. Let $f\colon [a,b]\to\R$ be a $5$-convex function of class $C^6[a,b]$. The affine transformation
\begin{equation}\label{subst}
 t = \frac{2}{b-a}(x-a)-1,\quad x\in[a,b]. 
\end{equation}
maps the interval $[a,b]$ onto $[-1,1]$, and the function $g\colon[-1,1]\to\R$ defined by
\[
 g(t) = \frac{b-a}{2}f\biggl(\frac{a+b}{2}+\frac{b-a}{2}t\biggr)
\]
is $5$-convex of class $C^6[-1,1]$, as $g^{(6)}$ is continuous and non-negative on $[-1,1]$. Applying \eqref{ineq:KomWas17} to the function $g$, simple calculations lead to the inequality
\[
 0 \xle \int_a^b f(x) dx - \G[f;a,b] \xle \Lob[f;a,b] - \int_a^b f(x) dx.
\]
From this, we immediately obtain:
\begin{prop}\label{prop:GL}
 If the function $f\in C^6[a,b]$ is $5$-convex, then
 \[
  \G[f;a,b] \xle \int_a^b f(x) dx \xle \frac{\G[f;a,b] + \Lob[f;a,b]}{2}.
 \]
\end{prop}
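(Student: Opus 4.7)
The proposition follows essentially by trivial algebra from the inequality already obtained immediately before its statement, namely
\[
 0 \xle \int_a^b f(x)\,dx - \G[f;a,b] \xle \Lob[f;a,b] - \int_a^b f(x)\,dx,
\]
which the text has just deduced from \eqref{ineq:KomWas17} via the affine transformation \eqref{subst}. So the plan is to take this inequality as the starting point and simply unpack it into the two claimed bounds.

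First, the left-hand inequality $0 \xle \int_a^b f(x)\,dx - \G[f;a,b]$ is literally the statement $\G[f;a,b] \xle \int_a^b f(x)\,dx$, giving the lower bound in the proposition. For the upper bound, I would rearrange the right-hand inequality
\[
 \int_a^b f(x)\,dx - \G[f;a,b] \xle \Lob[f;a,b] - \int_a^b f(x)\,dx
\]
by adding $\int_a^b f(x)\,dx + \G[f;a,b]$ to both sides, which yields
\[
 2\int_a^b f(x)\,dx \xle \G[f;a,b] + \Lob[f;a,b],
\]
and then dividing by two.

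There is no genuine obstacle at this stage: all the nontrivial content (the sharp two-sided bound of Komisarski and the author, together with its transfer from $[-1,1]$ to a general $[a,b]$) has already been carried out in the preceding paragraphs. The proposition is best viewed as a convenient reformulation tailored to the adaptive scheme, since the midpoint $\tfrac12(\G+\Lob)$ is what will naturally drive the stopping criterion later on, rather than a result requiring any further analytic input.
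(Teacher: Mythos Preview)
Your proposal is correct and matches the paper's approach exactly: the paper simply states ``From this, we immediately obtain'' the proposition, leaving the trivial algebraic rearrangement you spelled out to the reader. There is nothing to add or correct.
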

This serves as the starting point for our considerations.

\section{The Adaptive Method}

Let
\[
 Q[f;a,b] = \frac{\G[f;a,b] + \frac{\G[f;a,b] + \Lob[f;a,b]}{2}}{2} = \frac{3}{4}\G[f;a,b] + \frac{1}{4}\Lob[f;a,b].
\]
A direct consequence of this definition and Proposition~\ref{prop:GL} is the following corollary, which is central to the construction of our adaptive method.
\begin{cor}\label{cor:Q}
 If the function $f\in C^6[a,b]$ is $5$-convex, then
 \[
  \biggl|
   \int_a^b f(x) dx - Q[f;a,b]
  \biggr| \xle
  \frac{1}{4}\bigl(\Lob[f;a,b] - \G[f;a,b] \bigr).
 \]
\end{cor}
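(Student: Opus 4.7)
The plan is to observe that the quadrature $Q$ is designed as the midpoint of the interval that, by Proposition~\ref{prop:GL}, is known to contain the true integral. So the whole argument reduces to the elementary bound that the distance from a point in an interval to its midpoint is at most half the length of the interval.

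More concretely, I would set the shorthand $G = \G[f;a,b]$, $L = \Lob[f;a,b]$, $I = \int_a^b f(x)\,dx$, and $M = \tfrac{1}{2}(G+L)$. Proposition~\ref{prop:GL} then reads $G \le I \le M$. Next I would check the algebraic identity
\[
 Q[f;a,b] = \tfrac{3}{4}G + \tfrac{1}{4}L = \tfrac{1}{2}\bigl(G + M\bigr),
\]
so $Q$ is exactly the midpoint of the interval $[G, M]$. Since $I \in [G,M]$, I conclude
\[
 \bigl| I - Q[f;a,b] \bigr| \xle \tfrac{1}{2}(M - G) = \tfrac{1}{2}\Bigl(\tfrac{G+L}{2} - G\Bigr) = \tfrac{1}{4}(L - G),
\]
which is the required inequality.

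There is no real obstacle: all the analytic content sits inside Proposition~\ref{prop:GL} (hence ultimately in the Komisarski--Wąsowicz refinement of Bessenyei--Páles). The corollary is purely a repackaging that exploits the fact that the midpoint minimises the worst-case distance to a point in a known interval, which is why the combination $\tfrac{3}{4}\G + \tfrac{1}{4}\Lob$ — rather than, say, $\tfrac{1}{2}\G + \tfrac{1}{2}\Lob$ — is the right choice for the adaptive scheme.
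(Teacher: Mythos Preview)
Your argument is correct and is essentially identical to the paper's own proof: both take $\alpha=\G[f;a,b]$, $\beta=\tfrac{1}{2}(\G[f;a,b]+\Lob[f;a,b])$, observe from Proposition~\ref{prop:GL} that the integral lies in $[\alpha,\beta]$, identify $Q$ as the midpoint $\tfrac{1}{2}(\alpha+\beta)$, and apply the elementary bound $|t-\tfrac{\alpha+\beta}{2}|\le\tfrac{\beta-\alpha}{2}$. Your added remark explaining why the weights $\tfrac{3}{4},\tfrac{1}{4}$ arise is a nice touch but does not change the substance of the proof.
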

\begin{proof}
 If $\alpha \xle t \xle \beta$, then $\Bigl|t-\frac{\alpha+\beta}{2}\Bigr| \xle \frac{\beta-\alpha}{2}$ (an easy geometric observation). It suffices to set $\alpha=\G[f;a,b]$ and $\beta=\frac{\G[f;a,b] + \Lob[f;a,b]}{2}$.
\end{proof}

Inequality~\eqref{ineq:BP} was refined in Proposition~\ref{prop:GL} by taking the arithmetic mean of the quadratures $G[f;a,b]$ and $\Lob[f;a,b]$. We will now show that taking a further arithmetic mean in the form of $Q[f;a,b]$ does not inherently improve our results, as there is no fixed relation between the integral of a $5$-convex function and the quadrature $Q[f;a,b]$. To this end, let us consider two functions $f(x)=(x-0.6)^7_+$ and $g(x)=(x-0.7)^7_+$. Since $(x^n_+)' = nx^{n-1}_+$ for $n\in\mathbb{N}$, $n\xge 2$, it follows that $f,g\in C^6[-1,1]$. Their sixth derivatives are clearly non-negative, so $f,g$ are $5$-convex. It can be computed that
\begin{align*}
 \int_{-1}^1 f(x) dx &\approx 8.2\cdot 10^{-5},\quad Q[f;-1,1]\approx 7.0\cdot 10^{-5},\\[1ex]
 \int_{-1}^1 g(x) dx &\approx 8.2\cdot 10^{-6},\quad Q[g;-1,1]\approx 9.1\cdot 10^{-6}.
\end{align*}
Thus,
\[
 \int_{-1}^1 f(x) dx > Q[f;-1,1] \quad\text{and}\quad \int_{-1}^1 g(x) dx < Q[f;-1,1].
\]

If the interval $[a,b]$ is relatively long, estimating the precision of the integral approximation $\int_a^b f(x) dx$ using quadrature $Q[f;a,b]$ based on Corollary~\ref{cor:Q} may be imprecise. In such cases, as is standard in numerical analysis, we resort to \emph{composite quadratures}.

For $n\in\mathbb{N}$, we divide the interval $[a,b]$ into $n$ subintervals of equal length with points $a=x_0 < x_1 < \dots < x_n=b$ and define:
\begin{align*}
 \G_n[f;a,b] &= \sum_{k=1}^n \G[f;x_{k-1},x_k],\\[1ex]
 \Lob_n[f;a,b] &= \sum_{k=1}^n \Lob[f;x_{k-1},x_k],\\[1ex]
 Q_n[f;a,b] &= \sum_{k=1}^n Q[f;x_{k-1},x_k].
\end{align*}

Error forms for the quadratures $\G[f;-1,1]$ and $\Lob[f;-1,1]$ for $f\in C^6[-1,1]$ can be found in almost any numerical analysis textbook (for a quick reference, see e.g., Abramowitz, Stegun~\cite[pp. 887, 888]{AbrSte65}). By applying transformation~\eqref{subst}, these can be rewritten for the interval $[a,b]$, and the errors of the composite quadratures can then be estimated. Thus, for $f\in C^6[a,b]$, we obtain:
\begin{align*}
 \biggl|\int_a^b f(x) dx - \G_n[f;a,b] \biggr| &\xle \frac{(b-a)^7}{2016000\,n^6}\|f^{(6)}\|_{\infty},\\[1ex]
 \biggl|\int_a^b f(x) dx - \Lob_n[f;a,b] \biggr| &\xle \frac{(b-a)^7}{1512000\,n^6}\|f^{(6)}\|_{\infty}
\end{align*}
which implies that
\[
 \lim\limits_{n\to\infty} \G_n[f;a,b] = \lim\limits_{n\to\infty} \Lob_n[f;a,b] = \int_a^b f(x) dx,
\]
which in turn gives us
\begin{equation}\label{lim}
 \lim\limits_{n\to\infty}\bigl| \Lob_n[f;a,b] - \G_n[f;a,b] \bigr| = 0.
\end{equation}

The following result is the heart of our method.

\begin{thm}\label{th:adaptive}
 If the function $f\in C^6[a,b]$ is $5$-convex or $5$-concave, then
 \[
  \biggl|
   \int_a^b f(x) dx - Q_n[f;a,b]
  \biggr| \xle
  \frac{1}{4}\bigl|\Lob_n[f;a,b] - \G_n[f;a,b] \bigr|.
 \]
\end{thm}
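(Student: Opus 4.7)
The plan is to apply Corollary~\ref{cor:Q} to each subinterval of the partition $a=x_0 < x_1 < \dots < x_n = b$ and then sum, using the triangle inequality and the linearity of all three quantities (the integral, the Gauss and Lobatto quadratures, and hence $Q$) in $f$ and additivity across adjacent intervals.

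First I would handle the $5$-convex case. Since $5$-convexity on $[a,b]$ descends to every subinterval (a trivial consequence of the definition via divided differences, or equivalently via $f^{(6)}\ge 0$), Corollary~\ref{cor:Q} gives, for each $k=1,\dots,n$,
\[
 \biggl|\int_{x_{k-1}}^{x_k} f(x)\,dx - Q[f;x_{k-1},x_k]\biggr|
 \xle \tfrac{1}{4}\bigl(\Lob[f;x_{k-1},x_k] - \G[f;x_{k-1},x_k]\bigr).
\]
Summing over $k$ and invoking the triangle inequality,
\[
 \biggl|\int_a^b f(x)\,dx - Q_n[f;a,b]\biggr|
 \xle \sum_{k=1}^n \biggl|\int_{x_{k-1}}^{x_k} f(x)\,dx - Q[f;x_{k-1},x_k]\biggr|
 \xle \tfrac{1}{4}\bigl(\Lob_n[f;a,b] - \G_n[f;a,b]\bigr).
\]
Because Proposition~\ref{prop:GL} applied subinterval-by-subinterval yields $\G[f;x_{k-1},x_k]\xle\Lob[f;x_{k-1},x_k]$ for every $k$, we have $\Lob_n[f;a,b]\xge \G_n[f;a,b]$, so the right-hand side already equals $\tfrac{1}{4}\bigl|\Lob_n[f;a,b] - \G_n[f;a,b]\bigr|$.

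For the $5$-concave case I would simply note that $-f$ is $5$-convex of class $C^6[a,b]$, and that the integral and all three quadratures $Q_n$, $\G_n$, $\Lob_n$ are linear in $f$; hence
\[
 \biggl|\int_a^b f(x)\,dx - Q_n[f;a,b]\biggr|
 = \biggl|\int_a^b (-f)(x)\,dx - Q_n[-f;a,b]\biggr|
 \xle \tfrac{1}{4}\bigl(\Lob_n[-f;a,b] - \G_n[-f;a,b]\bigr)
 = \tfrac{1}{4}\bigl(\G_n[f;a,b] - \Lob_n[f;a,b]\bigr),
\]
and the absolute value on the right-hand side of the theorem absorbs the opposite sign. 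This is the only point where a bit of care is needed: in the convex case the inner difference is non-negative, in the concave case it is non-positive, and the statement is phrased with $|\cdot|$ precisely to unify the two.

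I do not anticipate any real obstacle; the proof is essentially a packaging of Corollary~\ref{cor:Q} via additivity and the triangle inequality, with the concave case reduced to the convex one by linearity. The only subtlety worth flagging explicitly is the need to observe $\G_n\xle \Lob_n$ (or $\xge$ in the concave case) to justify rewriting the bound with $|\Lob_n-\G_n|$.
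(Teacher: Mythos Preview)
Your proof is correct and follows essentially the same route as the paper: apply Corollary~\ref{cor:Q} on each subinterval, sum via the triangle inequality, and reduce the $5$-concave case to the convex one by linearity applied to $-f$. Your explicit justification that $\Lob_n\xge\G_n$ (via Proposition~\ref{prop:GL} on each subinterval) is a nice touch that the paper leaves implicit when it passes from $\Lob_n-\G_n$ to $|\Lob_n-\G_n|$.
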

\begin{proof}
 First, assume that $f\in C^6[a,b]$ is $5$-convex. Given the construction of the quadratures $\G_n[f;a,b]$ and $\Lob_n[f;a,b]$, by Corollary~\ref{cor:Q} we have:
 \begin{align*}
  \biggl| \int_a^b f(x) dx - Q_n[f;a,b] \biggr| &=
  \biggl| \sum_{k=1}^n\biggl( \int_{x_{k-1}}^{x_k} f(x) dx - Q[f;x_{k-1},x_k]\biggr)\biggr|\\
  &\xle \sum_{k=1}^n \biggl| \int_{x_{k-1}}^{x_k} f(x) dx - Q[f;x_{k-1},x_k]\biggr|\\
  &\xle \frac{1}{4}\sum_{k=1}^n\bigl(\Lob[f;x_{k-1},x_k]-\G[f;x_{k-1},x_k]\bigr)\\
  &=\frac{1}{4}\bigl(\Lob_n[f;a,b]-\G_n[f;a,b]\bigr)
  =\frac{1}{4}\bigl|\Lob_n[f;a,b] - \G_n[f;a,b] \bigr|.
 \end{align*}
 If the function $f$ is $5$-concave, we apply the above inequality to the $5$-convex function $-f$. Taking into account the linearity of the integral and the operators $Q_n, \Lob_n, \G_n$, we arrive at:
 \[
  \biggl| \int_a^b f(x) dx - Q_n[f;a,b] \biggr| \xle \frac{1}{4}\bigl(\G_n[f;a,b]-\Lob_n[f;a,b]\bigr)=\frac{1}{4}\bigl|\Lob_n[f;a,b] - \G_n[f;a,b] \bigr|.
 \]
\end{proof}

From Theorem~\ref{th:adaptive}, it immediately follows:
\begin{cor}\label{cor:adaptive}
 If the function $f\in C^6[a,b]$ is $5$-convex or $5$-concave and
 \[
  \bigl|\Lob_n[f;a,b]-\G_n[f;a,b]\bigr| \xle 4\varepsilon,
 \]
 then
 \[
  \biggl|\int_a^b f(x) dx - Q_n[f;a,b]\biggr| \xle \varepsilon.
 \]
\end{cor}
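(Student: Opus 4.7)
The plan is straightforward: this corollary is an immediate consequence of Theorem~\ref{th:adaptive} combined with the hypothesis. First I would invoke Theorem~\ref{th:adaptive}, whose assumptions ($f\in C^6[a,b]$, $5$-convex or $5$-concave) coincide exactly with those of the corollary, to obtain
\[
 \biggl|\int_a^b f(x)\,dx - Q_n[f;a,b]\biggr| \xle \frac{1}{4}\bigl|\Lob_n[f;a,b] - \G_n[f;a,b]\bigr|.
\]
Then I would substitute the standing hypothesis $|\Lob_n[f;a,b]-\G_n[f;a,b]| \xle 4\varepsilon$ into the right-hand side; the factor $\tfrac{1}{4}$ cancels against the factor $4$, yielding exactly $\varepsilon$, which is the desired bound. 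No additional ingredients (divided differences, derivative estimates, or the specific form of $Q$) are needed beyond what has already been established.

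There is essentially no obstacle: the corollary is simply a rescaling of Theorem~\ref{th:adaptive} tailored to a prescribed tolerance. Its significance, however, is practical rather than technical—it turns the theoretical error bound into a \emph{computable} stopping criterion for the adaptive algorithm, since the quantity $|\Lob_n[f;a,b]-\G_n[f;a,b]|$ is available at each refinement step without any knowledge of $f^{(6)}$ or any other derivative bound on $f$. It is worth remarking, in passing, that by~\eqref{lim} the right-hand side tends to zero as $n\to\infty$, so the inequality $|\Lob_n-\G_n|\xle 4\varepsilon$ is guaranteed to be met after finitely many refinements, ensuring termination of the adaptive procedure.
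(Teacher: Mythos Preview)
Your proposal is correct and matches the paper's own treatment: the corollary is stated as an immediate consequence of Theorem~\ref{th:adaptive}, obtained by bounding $\tfrac{1}{4}\bigl|\Lob_n[f;a,b]-\G_n[f;a,b]\bigr|$ using the hypothesis. Your additional remark about termination via~\eqref{lim} is also exactly how the paper justifies the algorithm in the subsequent step~3.
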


The above inequality will serve as the \emph{stopping criterion} for our adaptive integration method for $5$-convex or $5$-concave functions of class $C^6[a,b]$. The algorithm proceeds in three steps:

\begin{enumerate}[1.]
 \item
  Take a function $f\in C^6[a,b]$ that is either $5$-convex or $5$-concave.
 \item
  Set the desired approximation accuracy $\varepsilon > 0$ and look for $n\in\mathbb{N}$ such that
  \[
   \biggl|\int_a^b f(x) dx - Q_n[f;a,b]\biggr| \xle \varepsilon.
  \] 
  Calculations begin with $n=1$.
 \item
  Increase $n$ until the inequality $\bigl|\Lob_n[f;a,b]-\G_n[f;a,b]\bigr| \xle 4\varepsilon$ is satisfied. Note that such an $n$ exists due to~\eqref{lim}, so the algorithm is guaranteed to terminate. By Corollary~\ref{cor:adaptive}, we obtain an approximation of the integral of $f$ with the specified accuracy.
\end{enumerate}

\section{Numerical Experiments}

In the last part of the paper, we will see how our adaptive method performs in practice. For two $5$-convex functions of class $C^6$, we will examine how many subintervals are required to achieve the desired integral approximation accuracy. We use the same functions as in \cite{Was20}, where an analogous adaptive method based on composite Simpson and Chebyshev quadratures was implemented for $3$-convex or $3$-concave functions. The functions considered here are, of course, both $5$-convex and $3$-convex. Calculations were performed using SageMath version 9.5~\cite{sagemath95}.

\begin{ex}
Let $f(x)=\frac{1}{x}$, $x\in[1,2]$. We examine how many subdivisions are required to approximate the integral $\int_1^2 \frac{dx}{x} = \ln 2$ using the algorithm based on Corollary~\ref{cor:adaptive} for accuracies $\varepsilon=10^{-k}$ ($k=1,2,\dots,16$). The results are compared with the algorithm described in \cite{Was20}.

\begin{center}
\begin{tabular}{lp{4cm}p{4cm}}
 \hline
 $f(x)=\frac{1}{x}$ & \multicolumn{2}{c}{Number of subdivisions of $[1,2]$}\\\cline{2-3}
 Accuracy & Algorithm based\newline on Corollary~\ref{cor:adaptive}  & Algorithm for\newline $3$-convex functions,\newline see~\cite[Experiment 1]{Was20}\\
 \hline
 $10^{-1}$ & $1$  & $1$\\
 $10^{-2}$ & $1$ & $1$\\
 $10^{-3}$ & $1$ & $1$\\
 $10^{-4}$ & $1$ & $2$\\
 $10^{-5}$ & $2$ & $3$\\
 $10^{-6}$ & $2$ & $5$\\
 $10^{-7}$ & $3$ & $9$\\
 $10^{-8}$ & $4$ & $16$\\
 $10^{-9}$ & $6$ & $28$\\
 $10^{-10}$ & $9$ & $50$\\
 $10^{-11}$ & $13$ & $89$\\
 $10^{-12}$ & $19$ & $158$\\
 $10^{-13}$ & $27$ & $280$\\
 $10^{-14}$ & $39$ & $498$\\
 $10^{-15}$ & $57$ & $884$\\
 $10^{-16}$ & $84$ & $1572$
\end{tabular}
\end{center}

\end{ex}

\begin{ex}
Let $f(x)=e^x$. We assume an accuracy $\varepsilon=10^{-8}$ and examine how many subdivisions are needed to approximate the integral $\int_0^b e^x dx$ for $b\in\{1,2,\dots,10\}$. The results are also compared with the algorithm described in \cite{Was20}.

\begin{center}
\begin{tabular}{lp{4cm}p{4cm}}
 \hline
 $f(x)=e^x$ & \multicolumn{2}{c}{Number of subdivisions of the interval}\\\cline{2-3}
 Integration interval & Algorithm based\newline on Corollary~\ref{cor:adaptive}  & Algorithm for\newline $3$-convex functions,\newline see~\cite[Experiment 2]{Was20}\\
 \hline
 $[0,1]$ & $2$ & $12$\\
 $[0,2]$ & $5$ & $33$\\
 $[0,3]$ & $9$ & $64$\\
 $[0,4]$ & $14$ & $111$\\
 $[0,5]$ & $21$ & $178$\\
 $[0,6]$ & $29$ & $275$\\
 $[0,7]$ & $40$ & $412$\\
 $[0,8]$ & $54$ & $604$\\
 $[0,9]$ & $71$ & $872$\\
 $[0,10]$ & $93$ & $1244$
\end{tabular}
\end{center}

\end{ex}

Both experiments demonstrate that the adaptive method proposed in this paper requires significantly fewer computations than the analogous method from \cite{Was20}.

\bibliographystyle{plain}
\bibliography{wasowicz}

@PhdThesis{Hop26,
    author = {Hopf, Eberhard},
     title = {\"Uber die Zusammenh\"ange zwischen gewissen h\"oheren Differenzenquotienten reeller Funktionen einer reellen Variablen und deren Differenzierbarkeitseigenschaften},
    school = {Friedrich--Wilhelms--Universit\"at},
   address = {Berlin},
      year = {1926},
      type = {Ph.{D}. dissertation},
}

@Article{Pop34,
    author = {Popoviciu, Tiberiu},
     title = {Sur quelques propri\'et\'es des fonctions d'une ou de deux variables r\'eelles},
   journal = {Mathematica, Cluj},
    volume = {8},
      year = {1934},
     pages = {1--85},
  keywords = {{Analysis}},
  language = {French}
}

@Book{Pop44,
    author = {Popoviciu, Tiberiu},
     title = {Les fonctions convexes},
    series = {Actualit\'es Sci. Ind., no. 992},
 publisher = {Hermann et Cie, Paris},
      year = {1944},
     pages = {76},
   mrclass = {27.0X},
  mrnumber = {0018705 (8,319a)},
mrreviewer = {E. F. Beckenbach},
}

@article{BesPal02,
    AUTHOR = {Bessenyei, Mih\'aly and P\'ales, Zsolt},
     TITLE = {Higher--order generalizations of {H}adamard's inequality},
   JOURNAL = {Publ. Math. Debrecen},
  FJOURNAL = {Publicationes Mathematicae Debrecen},
    VOLUME = {61},
      YEAR = {2002},
    NUMBER = {3-4},
     PAGES = {623--643},
      ISSN = {0033-3883},
     CODEN = {PUMAAR},
   MRCLASS = {26D15},
  MRNUMBER = {1943721 (2003k:26021)},
MRREVIEWER = {Wolfram Koepf},
}

@article {KomWas17,
  AUTHOR = {Komisarski, Andrzej and Wąsowicz, Szymon},
  TITLE = {Inequalities between remainders of quadratures},
  JOURNAL = {Aequationes Math.},
  FJOURNAL = {Aequationes Mathematicae},
  VOLUME = {91},
  YEAR = {2017},
  NUMBER = {6},
  PAGES = {1103--1114},
  ISSN = {0001-9054},
  MRCLASS = {41A80 (26A51 26D15 41A55 65D30)},
  MRNUMBER = {3735489},
  MRREVIEWER = {Dan A. B\u arbosu},
  DOI = {10.1007/s00010-017-0505-8},
  URL = {https://doi.org/10.1007/s00010-017-0505-8},
}

@article {Lyn69,
  AUTHOR = {Lyness, J. N.},
  TITLE = {Notes on the adaptive {S}impson quadrature routine},
  JOURNAL = {J. Assoc. Comput. Mach.},
  FJOURNAL = {Journal of the Association for Computing Machinery},
  VOLUME = {16},
  YEAR = {1969},
  PAGES = {483--495},
  ISSN = {0004-5411},
  MRCLASS = {65.55},
  MRNUMBER = {0240981},
  DOI = {10.1145/321526.321537},
  URL = {https://doi.org/10.1145/321526.321537},
}

@article {RowVar72,
  AUTHOR = {Rowland, J. H. and Varol, Y. L.},
  TITLE = {Exit criteria for {S}impson's compound rule},
  JOURNAL = {Math. Comp.},
  FJOURNAL = {Mathematics of Computation},
  VOLUME = {26},
  YEAR = {1972},
  PAGES = {699--703},
  ISSN = {0025-5718},
  MRCLASS = {65D30},
  MRNUMBER = {0341823},
  MRREVIEWER = {R. D. Riess},
  DOI = {10.2307/2005097},
  URL = {https://doi.org/10.2307/2005097},
}

@article {CleCur60,
  AUTHOR = {Clenshaw, C. W. and Curtis, A. R.},
  TITLE = {A method for numerical integration on an automatic computer},
  JOURNAL = {Numer. Math.},
  FJOURNAL = {Numerische Mathematik},
  VOLUME = {2},
  YEAR = {1960},
  PAGES = {197--205},
  ISSN = {0029-599X},
  MRCLASS = {65.00},
  MRNUMBER = {0117885},
  MRREVIEWER = {A. H. Stroud},
  DOI = {10.1007/BF01386223},
  URL = {https://doi.org/10.1007/BF01386223},
}

@article {Gon12,
  Author = {Gonnet, Pedro},
  Title = {A review of error estimation in adaptive quadrature},
  Journal = {ACM Comput. Surv.},
  Fjournal = {ACM Computing Surveys (CSUR)},
  Volume = {44},
  Number = {4},
  Year = {2012},
}

@article {Was20,
  AUTHOR = {Wąsowicz, Szymon},
  TITLE = {On a certain adaptive method of approximate integration and its stopping criterion},
  JOURNAL = {Aequationes Math.},
  FJOURNAL = {Aequationes Mathematicae},
  VOLUME = {94},
  YEAR = {2020},
  NUMBER = {5},
  PAGES = {887--898},
  ISSN = {0001-9054},
  DOI = {10.1007/s00010-020-00719-0},
  URL = {https://doi.org/10.1007/s00010-020-00719-0},
}

@book{AbrSte65,
  title={Handbook of Mathematical Functions with Formulas, Graphs, and Mathematical Tables},
  author={Abramowitz, Milton and Stegun, Irene A.},
  year={1965},
  publisher={Dover Publications},
  address={New York},
  isbn={978-0486612720}
}

@manual{sagemath95,
  Key          = {SageMath},
  Author       = {{The Sage Developers}},
  Title        = {{S}ageMath, the {S}age {M}athematics {S}oftware {S}ystem ({V}ersion 9.5)},
  Note         = {{\url{https://www.sagemath.org}}},
  Year         = {2022},
}

\end{document}